\newlength\aftertitskip     \newlength\beforetitskip
\newlength\interauthorskip  \newlength\aftermaketitskip
\def\maketitle{\par
 \begingroup
   \def\thefootnote{\fnsymbol{footnote}}
   \def\@makefnmark{\hbox to 4pt{$^{\@thefnmark}$\hss}}
   \@maketitle \@thanks
 \endgroup
\setcounter{footnote}{0}
 \let\maketitle\relax \let\@maketitle\relax
 \gdef\@thanks{}\gdef\@author{}\gdef\@title{}\let\thanks\relax}
\def\@startauthor{\noindent \normalsize\bf}
\def\@endauthor{}
\def\@starteditor{\noindent \small {\bf Editor:~}}
\def\@endeditor{\normalsize}
\def\@maketitle{\vbox{\hsize\textwidth
 \linewidth\hsize \vskip \beforetitskip
 {\begin{center} \LARGE\@title \par \end{center}} \vskip \aftertitskip
 {\def\and{\unskip\enspace{\rm and}\enspace}%
  \def\addr{\small}%
  \def\email{\hfill\small\sf}%
  \def\name{\normalsize\bf}%
  \def\AND{\@endauthor\rm\hss \vskip \interauthorskip \@startauthor}
  \@startauthor \@author \@endauthor}
}}
\numberwithin{equation}{section}
\newtheorem{theorem}{Theorem}
\newtheorem{prop}[theorem]{Proposition}
\newtheorem{corr}[theorem]{Corollary}
\newtheorem{conj}[theorem]{Conjecture}
\theoremstyle{definition}
\newcommand{\nlsum}{\sum\nolimits}
\newcommand{\nlprod}{\prod\nolimits}
\newcommand{\R}{\mathbb{R}}
\newcommand{\vx}{\bm{x}}
\newcommand{\pfrac}[2]{\left(\tfrac{#1}{#2}\right)}
\newcommand{\half}{\tfrac{1}{2}}
\DeclareMathOperator*{\tr}{tr}
\DeclareMathOperator*{\Diag}{Diag}
\begin{document}
\title{On inequalities for normalized Schur functions\thanks{To appear in \emph{European J. Combinatorics}, DOI: 10.1016/j.ejc.2015.07.005}}
\author{\name Suvrit Sra \email suvrit@mit.edu\\ 
\addr Laboratory for Information and Decision Systems\\ Massachusetts Institute of Technology\\ Cambridge, MA, 02139, United States}

\maketitle

\vskip0.4cm
\hrule
\vskip0.4cm

\begin{abstract}
  We prove a conjecture of Cuttler et al.~[2011] [A. Cuttler, C. Greene, and M. Skandera; \emph{Inequalities for symmetric means}. European J. Combinatorics, 32(2011), 745--761] on the monotonicity of \emph{normalized Schur functions} under the usual (dominance) partial-order on partitions. We believe that our proof technique may be helpful in obtaining similar inequalities for other symmetric functions.
\end{abstract}

We prove a conjecture of \citet{cuttler} on the monotonicity of normalized Schur functions under the majorization (dominance) partial-order on integer partitions.

Schur functions are one of the most important bases for the algebra of symmetric functions. Let $\vx  = (x_1,\ldots,x_n)$ be a tuple of $n$ real variables. Schur functions of $\vx$ are indexed by integer partitions $\lambda=(\lambda_1,\ldots,\lambda_n)$, where $\lambda_1 \ge \cdots \ge \lambda_n$, and can be written as the following ratio of determinants~\citep[pg.~49]{schur.phd}, \citep[(3.1)]{mac95}:
\begin{equation}
  \label{eq:1}
  s_\lambda(\vx) = s_\lambda(x_1,\ldots,x_n) := \frac{\det([x_i^{\lambda_j+n-j}]_{i,j=1}^n)}{\det([x_i^{n-j}]_{i,j=1}^n)}.
\end{equation}
To each Schur function $s_\lambda(\vx)$ we can associate the \emph{normalized Schur function}
\begin{equation}
  \label{eq:2}
  S_\lambda(\vx) \equiv S_\lambda(x_1,\ldots,x_n) := \frac{s_\lambda(x_1,\ldots,x_n)}{s_\lambda(1,\ldots,1)} = \frac{s_\lambda(\vx)}{s_\lambda(1^n)}.
\end{equation}
Let $\lambda$, $\mu \in \mathbb{R}^n$ be decreasingly ordered. We say $\lambda$ is \emph{majorized} by $\mu$, denoted $\lambda \prec \mu$, if
\begin{equation}
  \label{eq:4}
  \begin{split}
    \nlsum_{i=1}^k\lambda_i &= \nlsum_{i=1}^k\mu_i\quad\text{for}\ 1\le i \le n-1,\quad\text{and}\quad \nlsum_{i=1}^n \lambda_i = \nlsum_{i=1}^n \mu_i.
  \end{split}
\end{equation}
\citet{cuttler} studied normalized Schur functions~\eqref{eq:2} among other symmetric functions, and derived inequalities for them under the partial-order~\eqref{eq:4}. They also conjectured related inequalities, of which perhaps Conjecture~\ref{conj:one} is the most important. 

\begin{conj}[\protect{\citep{cuttler}}]
  \label{conj:one}
  Let $\lambda$ and $\mu$ be partitions; and let $\vx \ge 0$. Then,
  \begin{equation*}
    S_\lambda(\vx) \le S_\mu(\vx),\qquad\text{if and only if}\quad \lambda \prec \mu.
  \end{equation*}
\end{conj}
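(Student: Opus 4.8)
The statement is an ``if and only if'', and the two directions are of very different difficulty. The direction $S_\lambda\le S_\mu\Rightarrow\lambda\prec\mu$ (read as: the inequality holds for \emph{all} $\vx\ge 0$) is soft. Since $S_\lambda$ is homogeneous of degree $|\lambda|=\sum_i\lambda_i$ with $S_\lambda(1^n)=1$, evaluating at $\vx=(t,\dots,t)$ gives $t^{|\lambda|}\le t^{|\mu|}$ for all $t\ge 0$, hence $|\lambda|=|\mu|$. For $1\le k\le n-1$, evaluate instead at $\vx=(t,\dots,t,1,\dots,1)$ with $k$ copies of $t$: writing $s_\lambda=\sum_\alpha K_{\lambda\alpha}\vx^\alpha$ with $K_{\lambda\alpha}\ge 0$, $K_{\lambda\lambda}=1$, and every occurring composition $\alpha$ dominated (after sorting) by $\lambda$, the top power of $t$ is $t^{\lambda_1+\dots+\lambda_k}$ with strictly positive coefficient, so $S_\lambda(t,\dots,t,1,\dots,1)$ grows like a positive constant times $t^{\lambda_1+\dots+\lambda_k}$ as $t\to\infty$; comparing with the same estimate for $S_\mu$ forces $\lambda_1+\dots+\lambda_k\le\mu_1+\dots+\mu_k$. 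Hence $\lambda\prec\mu$.

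For the main direction, $\lambda\prec\mu\Rightarrow S_\lambda(\vx)\le S_\mu(\vx)$, the plan is to pass to an integral representation that simultaneously absorbs the normalizing constant $s_\lambda(1^n)$ and records the dependence on $\lambda$ only through a spectrum. Fix $\vx$ with distinct positive entries, put $y_i=\log x_i$, and let $Y=\Diag(y_1,\dots,y_n)$ and $B(\lambda)=\Diag(\lambda_i+n-i)_{i=1}^n$. Applying the Harish-Chandra--Itzykson--Zuber formula
\[
  \int_{U(n)} e^{\tr(YUBU^{*})}\,dU
  \;=\; \kappa_n\,\frac{\det\big([x_i^{\,\lambda_j+n-j}]_{i,j=1}^n\big)}{\prod_{i<j}(y_i-y_j)\ \prod_{i<j}(\lambda_i-\lambda_j+j-i)},
  \qquad \kappa_n=\prod_{k=1}^{n-1}k!,
\]
recognizing the numerator as $s_\lambda(\vx)\prod_{i<j}(x_i-x_j)$ by \eqref{eq:1}, and the second product in the denominator as $\kappa_n\,s_\lambda(1^n)$ by the Weyl dimension formula, everything collapses to
\[
  \int_{U(n)} e^{\tr(YU B(\lambda) U^{*})}\,dU \;=\; \frac{\prod_{i<j}(x_i-x_j)}{\prod_{i<j}(y_i-y_j)}\;S_\lambda(\vx).
\]
The prefactor is independent of $\lambda$ and is a product of ratios $(x_i-x_j)/(\log x_i-\log x_j)>0$, so it is positive; since the left-hand side and $S_\lambda(\vx)$ are positive, we may divide the $\mu$-version of this identity by the $\lambda$-version to obtain, for every $\vx$ with distinct positive entries,
\[
  \frac{S_\mu(\vx)}{S_\lambda(\vx)}
  \;=\; \frac{\int_{U(n)} e^{\tr(YU B(\mu) U^{*})}\,dU}{\int_{U(n)} e^{\tr(YU B(\lambda) U^{*})}\,dU}.
\]

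It then remains to show the numerator dominates the denominator. The key point is that $I_Y\colon B\mapsto\int_{U(n)}e^{\tr(YUBU^{*})}\,dU$, as a function on Hermitian matrices, is (i) unitarily invariant, because right translation preserves Haar measure, and (ii) convex, being an average over $U$ of the maps $B\mapsto e^{\tr((U^{*}YU)B)}$, each the exponential of a linear functional of $B$. Restricting $I_Y$ to diagonal matrices therefore gives a function $b\mapsto I_Y(\Diag b)$ on $\R^n$ that is symmetric (permuting the $b_i$ conjugates $\Diag b$ by a permutation matrix, which is unitary) and convex, hence Schur-convex. Now $\lambda\prec\mu$ is, by \eqref{eq:4}, exactly the statement that the decreasingly ordered spectra of $B(\lambda)$ and $B(\mu)$ satisfy $(\lambda_i+n-i)_i\prec(\mu_i+n-i)_i$ (one adds $(n-1,\dots,0)$ to both vectors). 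Schur-convexity then yields $I_Y(B(\lambda))\le I_Y(B(\mu))$, i.e.\ $S_\lambda(\vx)\le S_\mu(\vx)$ for all $\vx$ with distinct positive entries; since $S_\lambda,S_\mu$ are polynomials, this extends to all $\vx\ge 0$ by continuity, completing the proof.

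I expect the only real friction to be bookkeeping: verifying that the $\lambda$-dependent constants — $s_\lambda(1^n)$, the Vandermonde $\prod_{i<j}(\lambda_i-\lambda_j+j-i)$ of the shifted spectrum, and the group constant $\kappa_n$ — cancel exactly, so that the prefactor is honestly independent of $\lambda$, and handling the degenerate configurations (coincident or vanishing $x_i$, where the Harish-Chandra formula is read as a limit) by a plain continuity argument rather than casework. Everything else is standard: convexity of $I_Y$, the implication convex $+$ symmetric $\Rightarrow$ Schur-convex, and the translation of dominance of partitions into majorization of shifted spectra. (A more elementary fallback, avoiding analysis entirely, would reduce by transitivity to the case where $\mu$ covers $\lambda$ in the dominance order — so that $\mu=\lambda+e_r-e_s$ for standard basis vectors $e_r,e_s$ and a very restricted choice of $r<s$ — and then prove the covering inequality by manipulating the bialternant \eqref{eq:1} directly; but that route trades the constant-chasing above for a fiddly case analysis, and I would resort to it only if needed.)
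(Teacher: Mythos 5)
Your proposal is correct and follows essentially the same route as the paper: the HCIZ integral with $B(\lambda)=\Diag([\lambda_i+n-i]_i)$, cancellation of the $\lambda$-independent prefactor $\prod_{i<j}(x_i-x_j)/(\log x_i-\log x_j)$ against $s_\lambda(1^n)$ via the Weyl dimension formula, and Schur-convexity of $b\mapsto\int_{U(n)}e^{\tr(Y U\Diag(b)U^*)}\,dU$ from symmetry plus convexity (your ``exponential of a linear functional'' argument is the same as the paper's AM--GM/midpoint-convexity step). The only difference is that you also supply a self-contained asymptotic proof of the ``only if'' direction, which the paper simply cites from Cuttler et al.
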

\noindent\citet{cuttler} established necessity (i.e., $S_\lambda \le S_\mu$ only if $\lambda \prec \mu$), but sufficiency was left open. We prove sufficiency in this paper.

\begin{theorem}
  \label{thm:main}
  Let $\lambda$ and $\mu$ be partitions such that $\lambda \prec \mu$, and let $\vx \ge 0$. Then,
  \begin{equation*}
    S_\lambda(\vx) \le S_\mu(\vx).
  \end{equation*}
\end{theorem}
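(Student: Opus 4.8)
The plan is to trade the normalized Schur function for a single symmetric function on $\R^n$ and then invoke the principle that a symmetric convex function is Schur-convex. Since $s_\lambda$ is a polynomial in $\vx$ and $s_\lambda(1^n)>0$ is a constant, $S_\lambda$ is continuous on $\{\vx\ge 0\}$; as $S_\lambda$ and $S_\mu$ are symmetric in $\vx$, it suffices to prove the inequality on the dense subset $x_1>\dots>x_n>0$, which I assume henceforth. Write $\delta:=(n-1,n-2,\dots,1,0)$, $a_\gamma(\vx):=\det\bigl([x_i^{\gamma_j}]_{i,j=1}^n\bigr)$, and $\Delta(\gamma):=\nlprod_{1\le i<j\le n}(\gamma_i-\gamma_j)$. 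By \eqref{eq:1} and the Weyl dimension formula $s_\lambda(1^n)=\nlprod_{i<j}\tfrac{\lambda_i-\lambda_j+j-i}{j-i}=\Delta(\lambda+\delta)/\Delta(\delta)$, the normalized Schur function \eqref{eq:2} rewrites as
\[
  S_\lambda(\vx)=\frac{a_{\lambda+\delta}(\vx)/\Delta(\lambda+\delta)}{a_\delta(\vx)/\Delta(\delta)}=\frac{\Phi(\lambda+\delta)}{\Phi(\delta)},\qquad\text{where}\quad \Phi(\gamma):=\frac{a_\gamma(\vx)}{\Delta(\gamma)}.
\]
Permuting the $\gamma_j$ multiplies numerator and denominator of $\Phi$ by the same sign, so $\Phi$ is symmetric in $\gamma$; its apparent singularities on $\{\gamma_i=\gamma_j\}$ are removable, so $\Phi$ extends to a symmetric function on all of $\R^n$, and $\Phi(\delta)=\nlprod_{i<j}(x_i-x_j)/\nlprod_{i<j}(j-i)>0$.

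The theorem then reduces to the following.

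\medskip\noindent\emph{Key Lemma.}\enspace For fixed $x_1>\dots>x_n>0$ the function $\Phi$ is convex on $\R^n$.

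\medskip\noindent Granting this: a symmetric convex function is automatically Schur-convex, for if $u\prec v$ then, by the Hardy--Littlewood--P\'olya/Birkhoff characterization, $u$ lies in the convex hull of $\{(v_{\sigma(1)},\dots,v_{\sigma(n)}):\sigma\in\Sf_n\}$, so convexity followed by symmetry gives $\Phi(u)\le\Phi(v)$. Since $\lambda\prec\mu$ implies $\lambda+\delta\prec\mu+\delta$ (dominance is preserved under adding a common vector, and both $\lambda+\delta$, $\mu+\delta$ stay decreasing), we get $\Phi(\lambda+\delta)\le\Phi(\mu+\delta)$, and dividing by $\Phi(\delta)>0$ yields $S_\lambda(\vx)\le S_\mu(\vx)$.

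To prove the Key Lemma the idea is to expose the convexity of the determinantal ratio $\Phi$ through an integral representation. Put $t_i:=\log x_i$, so $t_1>\dots>t_n$. The Harish-Chandra/Itzykson--Zuber integration formula over $U(n)$ gives, for Hermitian $\Diag(\gamma),\Diag(t)$,
\[
  \int_{U(n)} e^{\tr\!\left(\Diag(\gamma)\,U\,\Diag(t)\,U^{\dagger}\right)}\,dU \;=\; c_n(t)\,\Phi(\gamma),
\]
with $c_n(t)>0$ a constant (one may take $c_n(t)=\nlprod_{k=1}^{n-1}k!\,\big/\,\Delta(t)$, up to sign), using $\det\bigl([e^{\gamma_i t_j}]\bigr)=\det\bigl([x_i^{\gamma_j}]\bigr)=a_\gamma(\vx)$. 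Since $\tr(\Diag(\gamma)M)=\langle\gamma,\operatorname{diag}(M)\rangle$, the left side equals $\int_{U(n)}e^{\langle\gamma,\,Z(U)\rangle}\,dU$ with $Z(U):=\operatorname{diag}\!\left(U\,\Diag(t)\,U^{\dagger}\right)$ --- an average of the convex functions $\gamma\mapsto e^{\langle\gamma,Z(U)\rangle}$, hence convex on $\R^n$; therefore so is $\Phi$. (Equivalently: by the Schur--Horn theorem the law of $Z(U)$ under Haar measure is a probability measure on the permutohedron of $t$, so $\Phi$ is, up to a positive factor, a moment generating function, which is convex; one may also avoid unitary integrals and prove convexity of $\Phi$ by induction on $n$ from the branching rule $s_\mu(x_1,\dots,x_n)=\sum_\nu s_\nu(x_1,\dots,x_{n-1})\,x_n^{|\mu|-|\nu|}$, which recursively realizes $\Phi$ as an integral of exponentials against a positive measure. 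In each version the substance is the same representation.)

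The only step that is not bookkeeping is the Key Lemma --- concretely, producing the integral/probabilistic representation that certifies convexity of $\Phi$; everything else (the algebraic rewriting of $S_\lambda$, translation-invariance of dominance, ``symmetric $+$ convex $\Rightarrow$ Schur-convex'', and the continuity reduction to strictly ordered positive $\vx$) is routine. This is also what makes the scheme portable: any symmetric family of means that, after a shift, admits such a convex integral representation will be monotone along the dominance order by the identical argument.
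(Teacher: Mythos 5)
Your proposal is correct and follows essentially the same route as the paper: both rewrite $S_\lambda(\vx)$ via the Harish-Chandra--Itzykson--Zuber integral as (a positive multiple of) a symmetric function of the shifted partition $\lambda+\delta$, and both conclude by the principle that a symmetric convex function is Schur-convex, your ``integral of the convex functions $\gamma\mapsto e^{\langle\gamma,Z(U)\rangle}$'' argument being the same in substance as the paper's AM--GM midpoint-convexity computation. The only differences are presentational (you make the shift $\lambda\prec\mu\Rightarrow\lambda+\delta\prec\mu+\delta$ and the reduction to distinct $x_i$ explicit, and sketch alternative certificates of convexity).
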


Our proof technique differs completely from~\citep{cuttler}: instead of taking a direct algebraic approach, we invoke  a well-known integral from random matrix theory. We believe that our approach might extend to yield inequalities for other symmetric polynomials such as Jack polynomials~\citep{jack70} or even Hall-Littlewood and Macdonald polynomials~\citep{mac95}.

\section{Majorization inequality for Schur polynomials}
\label{sec:proof}
Our main idea is to represent normalized Schur polynomials~\eqref{eq:2} using an integral compatible with the partial-order `$\prec$'. One such integral is the \emph{Harish-Chandra-Itzykson-Zuber (HCIZ)} integral~\citep{harish,zuber}:
\begin{equation}
  \label{eq:5}
  I(A,B) := \int_{U(n)} e^{\tr(U^*AUB)}dU = c_n
  \frac{\det([e^{a_ib_j}]_{i,j=1}^n)}{\Delta(\bm{a})\Delta(\bm{b})},
\end{equation}
where $dU$ is the Haar probability measure on the unitary group $U(n)$; $\bm a$ and $\bm b$ are vectors of eigenvalues of the Hermitian matrices $A$ and $B$; 
$\Delta$ is the \emph{Vandermonde determinant} $\Delta(\bm{a}) := \nlprod_{1 \le i < j \le n} (a_j - a_i)$; and $c_n$ is the constant 
\begin{equation}
  \label{eq:16}
  c_n = \Bigl(\nlprod_{i=1}^{n-1}i! \Bigr) = \Delta([1,\ldots,n]) = \nlprod_{1 \le i < j \le n}(j-i).
\end{equation}
The following observation~\citep{harish} is of central importance to us.
\begin{prop}
  \label{prop:gorin}
  Let $A$ be a Hermitian matrix, $\lambda$ an integer partition, and $B$ the diagonal matrix $\Diag([\lambda_j + n - j]_{j=1}^n)$. Then,
  \begin{equation}
    \label{eq:6}
    \frac{s_\lambda(e^{a_1},\ldots,e^{a_n})}{s_\lambda(1,\ldots,1)} = \frac{1}{E(A)}I(A,B),
  \end{equation}
  where the product $E(A)$ is given by
  \begin{equation}
    \label{eq:7}
    E(A) = \nlprod_{1 \le i < j \le n}\frac{e^{a_i}-e^{a_j}}{a_i-a_j}.
  \end{equation}
\end{prop}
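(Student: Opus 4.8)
The plan is to prove the identity~\eqref{eq:6} by direct substitution into the closed form of the HCIZ integral in~\eqref{eq:5}, so that no probabilistic content is needed—only the algebra of alternants together with one classical specialization. Set $x_i=e^{a_i}$ and $b_j=\lambda_j+n-j$, so that $B=\Diag([b_j]_{j=1}^n)$ is exactly the matrix in the hypothesis. The first step is the observation that the matrix appearing in~\eqref{eq:5} is literally the Schur alternant: since $e^{a_ib_j}=x_i^{\lambda_j+n-j}$, we have $\det([e^{a_ib_j}]_{i,j=1}^n)=\det([x_i^{\lambda_j+n-j}]_{i,j=1}^n)$, which is precisely the numerator of $s_\lambda(\vx)$ in~\eqref{eq:1}.

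Next I would dispose of the factor $E(A)\Delta(\bm a)$. Writing out the two products gives
\[E(A)\Delta(\bm a)=\nlprod_{1\le i<j\le n}\frac{e^{a_i}-e^{a_j}}{a_i-a_j}\cdot\nlprod_{1\le i<j\le n}(a_j-a_i)=\nlprod_{1\le i<j\le n}(x_j-x_i),\]
because each pair contributes $(x_i-x_j)\cdot\tfrac{a_j-a_i}{a_i-a_j}=x_j-x_i$. This Vandermonde product equals $(-1)^{\binom n2}\det([x_i^{n-j}]_{i,j=1}^n)$, i.e. $(-1)^{\binom n2}$ times the denominator of $s_\lambda(\vx)$. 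Substituting both observations into $\tfrac{1}{E(A)}I(A,B)=c_n\det([x_i^{b_j}])/\bigl(E(A)\Delta(\bm a)\Delta(\bm b)\bigr)$ and cancelling the Vandermonde against the Schur denominator collapses the right-hand side to
\[\frac{1}{E(A)}I(A,B)=\frac{c_n(-1)^{\binom n2}}{\Delta(\bm b)}\,s_\lambda(\vx).\]

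It then remains to identify the scalar prefactor with $1/s_\lambda(1^n)$. Because $\lambda$ is a partition the shifted parts satisfy $b_1>\cdots>b_n$, so $\Delta(\bm b)=\nlprod_{i<j}(b_j-b_i)=(-1)^{\binom n2}\nlprod_{i<j}(b_i-b_j)$; the two sign factors $(-1)^{\binom n2}$ cancel, leaving $c_n/\nlprod_{i<j}(b_i-b_j)$. Since $b_i-b_j=\lambda_i-\lambda_j+j-i$ and $c_n=\nlprod_{i<j}(j-i)$ by~\eqref{eq:16}, this prefactor is exactly the reciprocal of the classical principal-specialization (Weyl dimension) value $s_\lambda(1^n)=\nlprod_{1\le i<j\le n}\frac{\lambda_i-\lambda_j+j-i}{j-i}$, which I would quote from~\citep{mac95}. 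Combining the two displays gives $\tfrac{1}{E(A)}I(A,B)=s_\lambda(\vx)/s_\lambda(1^n)$, which is~\eqref{eq:6}.

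The computation is routine once the three alternants are lined up, and I expect the only delicate point to be bookkeeping: there are three Vandermonde-type factors ($\Delta(\bm a)$, $\Delta(\bm b)$, and the Schur denominator $\det([x_i^{n-j}])$), each carrying its own orientation convention, and one must verify that the accompanying signs $(-1)^{\binom n2}$ cancel in pairs rather than surviving. A secondary point worth a remark is that the closed form~\eqref{eq:5} and the factor $E(A)$ in~\eqref{eq:7} are stated for $A$ with distinct eigenvalues (so that $\Delta(\bm a)\neq0$ and each ratio $\tfrac{e^{a_i}-e^{a_j}}{a_i-a_j}$ is evaluated at distinct arguments); the general case follows since both sides of~\eqref{eq:6} extend continuously as the $a_i$ coalesce.
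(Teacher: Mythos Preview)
Your proof is correct and follows exactly the route sketched in the paper: substitute the Schur alternant~\eqref{eq:1}, the constant $c_n$ from~\eqref{eq:16}, and the Weyl dimension formula~\eqref{eq:15} into the closed form~\eqref{eq:5} of the HCIZ integral, then simplify. The paper's proof is a two-line telegraphic version of precisely this computation; you have filled in the sign bookkeeping (the two $(-1)^{\binom{n}{2}}$ factors cancelling) and the continuity remark for coalescing eigenvalues, neither of which the paper makes explicit.
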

\begin{proof}
  Recall from Weyl's dimension formula that
  \begin{equation}
    \label{eq:15}
    s_\lambda(1,\ldots,1) = \nlprod_{1\le i < j \le n} \frac{(\lambda_i-i)-(\lambda_j-j)}{j-i}.
  \end{equation}
  Now use identity~\eqref{eq:15}, definition~\eqref{eq:16}, and the ratio~(\ref{eq:1}) in~\eqref{eq:5}, to obtain~\eqref{eq:6}.
\end{proof}

Assume without loss of generality that for each $i$, $x_i > 0$  (for $x_i=0$, apply the usual continuity argument). Then, there exist reals $a_1,\ldots,a_n$ such that $e^{a_i} = x_i$, whereby
\begin{equation}
  \label{eq:8}
  S_\lambda(x_1,\ldots,x_n) = \frac{s_\lambda(e^{\log x_1},\ldots,e^{\log x_n})}{s_\lambda(1,\ldots,1)} = \frac{I(\log X,B(\lambda))}{E(\log X)},
\end{equation}
where $X=\Diag([x_i]_{i=1}^n)$; we write $B(\lambda)$ to explicitly indicate $B$'s dependence on $\lambda$ as in Prop.~\ref{prop:gorin}. Since $E(\log X)>0$, to prove Theorem~\ref{thm:main}, it suffices to prove Theorem~\ref{thm:key} instead.

\begin{theorem}
  \label{thm:key}
  Let $X$ be an arbitrary Hermitian matrix. Define the map $F: \R^n \to \R$ by
  \begin{equation*}
    F(\lambda) := I(X, \Diag(\lambda)),\qquad\lambda \in \R^n.
  \end{equation*}
  Then, $F$ is Schur-convex, i.e., if $\lambda, \mu \in \R^n$ such that $\lambda \prec \mu$, then $F(\lambda) \le F(\mu)$.
\end{theorem}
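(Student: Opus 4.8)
The plan is to work directly with the HCIZ integral rather than with the closed form. Writing $M := U^{*}XU$, which is Hermitian for every $U \in U(n)$, we have $\tr(U^{*}XU\,\Diag(\lambda)) = \sum_{i=1}^{n} M_{ii}\lambda_i = \langle d(U),\lambda\rangle$, where $d(U) := (M_{11},\ldots,M_{nn}) \in \R^n$ is the (real) vector of diagonal entries of $M$. Hence
\[
  F(\lambda) \;=\; \int_{U(n)} e^{\langle d(U),\lambda\rangle}\,dU .
\]
For each fixed $U$ the map $\lambda \mapsto e^{\langle d(U),\lambda\rangle}$ is the exponential of an affine function, hence convex on all of $\R^n$; and the integrand is continuous on the compact group $U(n)$, so the integral is finite. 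Therefore $F$ is an average of globally convex functions and is itself convex on $\R^n$.

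Next I would verify that $F$ is invariant under permutations of the coordinates of $\lambda$. For a permutation matrix $P_\pi$ one has $\Diag(P_\pi\lambda) = P_\pi\,\Diag(\lambda)\,P_\pi^{*}$, so the substitution $U \mapsto UP_\pi^{*}$, under which Haar measure is invariant, turns the defining integral for $F(P_\pi\lambda)$ into that for $F(\lambda)$; thus $F(P_\pi\lambda) = F(\lambda)$ for all $\pi \in \Sf_n$. (Equivalently, the closed form in~\eqref{eq:5} depends on its second argument only through the multiset of its eigenvalues.)

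Finally I would invoke the Hardy--Littlewood--P\'olya/Rado description of majorization: $\lambda \prec \mu$ holds precisely when $\lambda$ lies in the convex hull of the $n!$ vectors $\{P_\pi\mu : \pi \in \Sf_n\}$, i.e.\ $\lambda = \sum_{\pi} c_\pi P_\pi\mu$ with $c_\pi \ge 0$ and $\sum_\pi c_\pi = 1$. Combining convexity (Jensen) with symmetry,
\[
  F(\lambda) \;=\; F\Bigl(\sum\nolimits_{\pi} c_\pi P_\pi\mu\Bigr) \;\le\; \sum\nolimits_{\pi} c_\pi\, F(P_\pi\mu) \;=\; \sum\nolimits_{\pi} c_\pi\, F(\mu) \;=\; F(\mu),
\]
which is exactly Schur-convexity of $F$, and hence (via~\eqref{eq:8}) proves Theorem~\ref{thm:main}. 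I do not expect a serious obstacle here: the argument is short once the representation is chosen, and the only points requiring care are that convexity holds on \emph{all} of $\R^n$ (it does, since each integrand is globally convex) and that $F$ is genuinely symmetric, not merely symmetric on the ordered cone. The real content is conceptual---recognizing that, after integrating out $U$, the HCIZ integrand is an average of log-linear functions of $\lambda$; this is the precise sense in which the HCIZ representation is ``compatible with $\prec$,'' and it is what suggests the method will transfer to other symmetric functions possessing an analogous exponential-integral form.
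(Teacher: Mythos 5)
Your proof is correct and follows essentially the same route as the paper: both reduce Schur-convexity to the conjunction of convexity and permutation-symmetry of $F$, with symmetry coming from Haar invariance. The only differences are cosmetic---the paper cites Marshall--Olkin for ``convex $+$ symmetric $\Rightarrow$ Schur-convex'' and proves convexity via midpoint convexity (AM--GM under the integral), whereas you unpack that implication through the Hardy--Littlewood--P\'olya/Birkhoff characterization and note directly that each integrand $e^{\langle d(U),\lambda\rangle}$ is globally convex in $\lambda$, which if anything is slightly cleaner since it avoids having to invoke continuity of $F$ to pass from midpoint convexity to convexity.
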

\begin{proof}
  We know from~\citep[Proposition~C.2, pg.~97]{marOlk} that a convex and symmetric function is Schur-convex. From the HCIZ integral~(\ref{eq:5}) symmetry of $F$ is apparent; to establish its convexity it suffices to demonstrate midpoint convexity:
  \begin{equation}
    \label{eq:17}
    F\pfrac{\lambda+\mu}{2} \le \half F(\lambda) + \half F(\mu)\qquad\text{for}\quad \lambda,\mu\in\R^n.
  \end{equation}
  The elementary manipulations below show that inequality~\eqref{eq:17} holds.
  \begin{align*}
    F\pfrac{\lambda+\mu}{2} &=\quad\int_{U(n)}\exp\bigl(\tr\bigl[U^*XU \Diag\bigl(\tfrac{\lambda + \mu}{2}\bigr)\bigr]\bigr)dU\\
    &=\quad\int_{U(n)}\exp\bigl(\tr\bigl[\half U^*XU\Diag(\lambda) + \half U^*XU\Diag(\mu)\bigr]\bigr)dU\\
    &=\quad\int_{U(n)}\sqrt{\exp\bigl(\tr[U^*XU\Diag(\lambda)]\bigr) \cdot \exp\bigl(\tr[U^*XU\Diag(\mu)]\bigr)}dU\\
    &\le\quad\int_{U(n)}\left(\half\exp\bigl(\tr[U^*XU\Diag(\lambda)]\bigr) +
      \half\exp\bigl(\tr[U^*XU\Diag(\mu)]\bigr)\right)dU\\
    &=\quad\half F(\lambda) + \half F(\mu),
  \end{align*}
  where the inequality follows from the arithmetic-mean geometric-mean inequality. 
\end{proof}
\begin{corr}
  Conjecture~\ref{conj:one} is true.
\end{corr}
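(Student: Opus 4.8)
The plan is to read Conjecture~\ref{conj:one} as the biconditional ``$S_\lambda(\vx)\le S_\mu(\vx)$ for every $\vx\ge 0$ $\iff$ $\lambda\prec\mu$,'' and to obtain it by gluing together its two implications. The ``only if'' direction (necessity) is already in the literature: \citet{cuttler} proved that if $S_\lambda\le S_\mu$ holds pointwise on $\vx\ge 0$ then $\lambda$ is majorized by $\mu$, and I would simply cite this without redoing it.

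For the ``if'' direction (sufficiency) I would invoke Theorem~\ref{thm:main}, which states exactly that $\lambda\prec\mu$ implies $S_\lambda(\vx)\le S_\mu(\vx)$ for all $\vx\ge 0$. For completeness I would also recall the short chain of reductions that delivers Theorem~\ref{thm:main}: the case with some $x_i=0$ follows by the usual continuity argument, so one may assume $\vx>0$ and write $x_i=e^{a_i}$; then identity~\eqref{eq:8} together with $E(\log X)>0$ turns $S_\lambda\le S_\mu$ into $I(\log X,B(\lambda))\le I(\log X,B(\mu))$; and finally, adding the fixed decreasing vector $(n-1,n-2,\ldots,1,0)$ to $\lambda$ and to $\mu$ preserves majorization (and keeps both sequences strictly decreasing), so $\lambda\prec\mu$ gives $[\lambda_j+n-j]_{j=1}^n\prec[\mu_j+n-j]_{j=1}^n$, whence the Schur-convexity of $F$ from Theorem~\ref{thm:key} (applied with $X$ in place of $\log X$) yields $I(\log X,B(\lambda))=F([\lambda_j+n-j]_j)\le F([\mu_j+n-j]_j)=I(\log X,B(\mu))$. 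Chaining the two directions produces the equivalence asserted in the conjecture.

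The substantive work has already been carried out — it sits in the sufficiency argument via the HCIZ integral — so the corollary itself presents no real obstacle. The only point requiring care is the quantifier bookkeeping: one must phrase the statement so that the necessity result of \citet{cuttler} (an assertion about the inequality holding for \emph{all} admissible $\vx$) and the pointwise sufficiency of Theorem~\ref{thm:main} dovetail into the single biconditional ``$S_\lambda(\vx)\le S_\mu(\vx)$ for all $\vx\ge 0 \iff \lambda\prec\mu$.'' With that convention fixed, the proof of the corollary is a one-line appeal to these two facts.
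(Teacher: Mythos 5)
Your proposal is correct and matches the paper's (implicit) argument exactly: the corollary follows by combining the necessity direction already established by \citet{cuttler} with the sufficiency supplied by Theorem~\ref{thm:main}, itself obtained from the HCIZ representation~\eqref{eq:8} and the Schur-convexity of Theorem~\ref{thm:key}. Your added remark that $\lambda\prec\mu$ implies $[\lambda_j+n-j]_j\prec[\mu_j+n-j]_j$ is a correct and welcome explicit justification of a step the paper leaves tacit.
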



\vspace*{-4pt}
\subsection*{Acknowledgments}
I am grateful to a referee for uncovering an egregious error in my initial attempt at Theorem~\ref{thm:key}; thanks also to the same or different referee for the valuable feedback and encouragement. I thank Jonathan Novak (MIT) for his help with HCIZ references.

\bibliographystyle{abbrvnat}

\end{document}